\documentclass[12pt]{amsart}
\textwidth=150mm
\textheight=210mm
\hoffset=-10mm
\usepackage[latin1]{inputenc}
\usepackage{amsbsy}
\usepackage{amscd} 
\usepackage{amsfonts}
\usepackage{amsgen} 
\usepackage{amsmath}
\usepackage{amsopn} 
\usepackage{amssymb}
\usepackage{amstext}
\usepackage{amsthm} 
\usepackage{amsxtra}
\usepackage[all]{xy}
\usepackage{booktabs}
\usepackage{rotating}
\usepackage{tikz}
\usepackage{scalerel}

\theoremstyle{plain} 
\newtheorem{thm}{Theorem}[section]

\newtheorem{prop}[thm]{Proposition}
\newtheorem{lem}[thm]{Lemma}
\newtheorem{cor}[thm]{Corollary}
\theoremstyle{definition}
\newtheorem{defn}[thm]{Definition}
\newtheorem{rem}[thm]{Remark}

\numberwithin{equation}{section}

\renewcommand{\theta}{\vartheta}
\renewcommand{\phi}{\varphi}
\renewcommand{\epsilon}{\varepsilon}
\renewcommand{\subset}{\subseteq}

\newcommand{\N}{\mathbb N}

\newcommand{\Aut}{G_{aut}}

\newcommand{\QBan}{G_{aut}^+}
\newcommand{\QBic}{G_{aut}^*}

\begin{document}

\title{The Petersen Graph has no Quantum Symmetry}
\author{Simon Schmidt}
\address{Saarland University, Fachbereich Mathematik, 
66041 Saarbr\"ucken, Germany}
\email{simon.schmidt@math.uni-sb.de}
\thanks{The author is supported by the DFG project \emph{Quantumautomorphismen von Graphen}. He wants to thank his supervisor Moritz Weber for proofreading this article and for many helpful discussions on quantum automorphism groups of finite graphs.}
\date{\today}
\subjclass[2010]{46LXX (Primary); 20B25, 05CXX (Secondary)}
\keywords{finite graphs, graph automorphisms, automorphism groups, quantum automorphisms, quantum groups, quantum symmetries}

\begin{abstract}
In 2007, Banica and Bichon asked whether the well-known Petersen graph has quantum symmetry. In this article, we show that the Petersen graph has no quantum symmetry, i.e. the quantum automorphism group of the Petersen graph is its usual automorphism group, the symmetric group $S_5$.
\end{abstract}

\maketitle

\section*{Introduction}

The study of graph automorphisms is an important branch of graph theory. The automorphism group of a finite graph on $n$ vertices without multiple edges is given by 
\begin{align*}
\Aut(\Gamma):=\{\sigma\in S_n\;\mid\;\sigma\epsilon=\epsilon\sigma\}\subset S_n,
\end{align*}
where $\epsilon$ is the adjacency matrix of the graph and $S_n$ denotes the symmetric group. 

As a generalization of this concept, quantum automorphism groups of finite graphs were introduced in the framework of compact matrix quantum groups. In 2005, Banica \cite{QBan} defined the quantum automorphism group $\QBan(\Gamma)$ based on the $C^*$-algebra
\begin{align*}
C(\QBan(\Gamma)):= C(S_n^+) / \langle u\epsilon=\epsilon u\rangle,
\end{align*}
where $S_n^+$ denotes the quantum symmetric group defined by Wang \cite{WanSn}. There is also another definition given by Bichon in \cite{QBic}, which is a quantum subgroup of Banica's quantum automorphism group.

In \cite{BanBic}, Banica and Bichon computed $\QBan(\Gamma)$ for all vertex-transitive graphs $\Gamma$ of order up to eleven, except the Petersen graph. The Petersen graph (see figure \ref{figure}) is a strongly regular graph on ten vertices and it often appears as a counter-example to conjectures in graph theory. For more information concerning the Petersen graph, we refer to the book \cite{Petersen}.

We say that a graph has no quantum symmetry if the quantum automorphism group of Banica coincides with the usual automorphism group of the graph, as defined in \cite{BanBic}. An important task is now to get graphs that do not have quantum symmetry. There is some work done for circulant graphs on $p$ vertices, where $p$ is a prime, in \cite{Che}, but there are no results in a more general setting.  

Our main result is that the Petersen graph has no quantum symmetry \linebreak(Theorem \ref{nosymmetry}). In our proof, we are mainly using the fact that the Petersen graph is strongly regular. Therefore, the question is if we can use a similar approach for other strongly regular graphs and get results for graphs like the Clebsch graph and the Highman-Sims graph as asked in \cite{survey}.

\section{Finite graphs and the Petersen Graph}
In this section, we fix some notations and recall some well-known facts about the Petersen graph. 

\subsection{Finite graphs} 
A graph $\Gamma=(V,E)$ is \emph{finite}, if the set $V$ of \emph{vertices} and the set $E$ of \emph{edges} are finite. We denote by $r:E\to V$ the \emph{range map} and by $s:E\to V$ the \emph{source map}. A graph is \emph{undirected} if for every $e \in E$ there is a $f \in E$ with $s(f) = r(e)$ and $r(f) = s(e)$, it is \emph{directed} otherwise. For a finite graph $\Gamma=(V,E)$ with $V=\{1,\ldots,n\}$, its \emph{adjacency matrix} $\epsilon\in M_n(\N_0)$ is defined as $\epsilon_{ij}:=\#\{e\in E\;|\; s(e)=i, r(e)=j\}$, where $\N_0=\{0,1,2,\ldots\}$. 

\subsection{The Petersen Graph}
The Petersen graph is a finite, undirected graph on ten vertices and is defined by the drawing in figure \ref{figure}. We will denote the Petersen graph by $\mathrm{P}$ in this article. 
\begin{figure}[h]
\begin{center}
\begin{tikzpicture}
\draw (18:1cm) -- (162:1cm) -- (306:1cm) -- (90:1cm) -- (234:1cm) -- cycle;
\draw (18:2cm) -- (90:2cm) -- (162:2cm) -- (234:2cm) -- (306:2cm) -- cycle;
\foreach \x in {18,90,162,234,306}
{\draw (\x:1cm) -- (\x:2cm);
\draw[black,fill=black] (\x:2cm) circle (2pt);
\draw[black,fill=black] (\x:1cm) circle (2pt);}
\end{tikzpicture}
\end{center}
\caption{\label{figure}The Petersen Graph}
\end{figure}
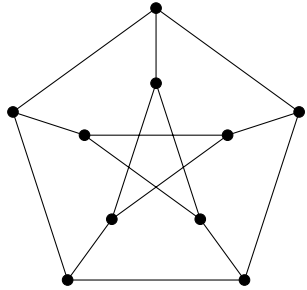

\subsection{Strongly regular graphs}
Now, let $\Gamma$ be an undirected graph. Let $v \in V$. The vertex $u \in V$ is called \emph{neighbor} of $v$, if $(v,u) \in E$. The \emph{degree} $\mathrm{deg}\, v$ of a vertex $v \in V$ denotes the number of edges in $\Gamma$ incident with $v$. We say that a graph $\Gamma$ is $k$-\emph{regular} for some $k \in \N_0$, if $\mathrm{deg}\, v = k$ for all $v \in V$. As we can see in figure \ref{figure}, the Petersen graph is $3$-regular. %We define the \emph{girth} $g(\Gamma)$ of a graph to be the order of the smallest cycle it contains. For example, we have $g(\mathrm{P}) =5$, where $\mathrm{P}$ denotes the Petersen Graph. siehe The Petersen graph

\begin{defn}
Let $\Gamma = (V,E)$ be a $k$-regular graph on $n$ vertices. We say that $\Gamma$ is \emph{strongly regular} if there exist $\lambda$, $\mu \in \N_0$ such that
\begin{itemize}
\item[$(i)$] adjacent vertices have $\lambda$ common neighbors,
\item[$(ii)$] non-adjacent vertices have $\mu$ common neighbors.
\end{itemize}
\end{defn}

\begin{prop}\cite{Petersen}
The Petersen Graph is a strongly regular graph with $\lambda = 0$, $\mu = 1$.
\end{prop}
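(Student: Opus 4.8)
The plan is to verify the two defining conditions of strong regularity directly, after first replacing the picture in figure \ref{figure} by a more symmetric combinatorial model. I would recall (or check against the drawing) that $\mathrm{P}$ is isomorphic to the Kneser graph whose vertices are the ten $2$-element subsets of $\{1,2,3,4,5\}$, with two such subsets joined by an edge exactly when they are disjoint. Under a suitable labelling the outer $5$-cycle, the inner pentagram, and the five spokes in the drawing correspond to the three types of pairs of $2$-subsets, so establishing the isomorphism amounts to matching up ten vertices and fifteen edges.

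Granting this model, the regularity and the parameters $\lambda,\mu$ become elementary counting statements. A fixed $2$-subset $A$ is disjoint from exactly $\binom{5-2}{2}=3$ other $2$-subsets, so $\mathrm{P}$ is $3$-regular, recovering what was already observed. If $A$ and $B$ are adjacent, i.e. disjoint, then $A\cup B$ has four elements, so a common neighbour $C$ would have to be a $2$-subset of the single remaining element, which is impossible; hence $\lambda=0$. If $A$ and $B$ are non-adjacent, i.e. $|A\cap B|=1$, then $A\cup B$ has three elements and a common neighbour $C$ must be disjoint from both, forcing $C$ to be the complement of $A\cup B$ in $\{1,\dots,5\}$; this is a single $2$-subset, so $\mu=1$.

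The only real work is establishing the identification with the Kneser model; once that is in place everything reduces to the inclusion--exclusion computations above. Alternatively one can bypass the model and argue straight from figure \ref{figure}: since $\mathrm{Aut}(\mathrm{P})$ acts transitively on vertices, on edges, and on non-edges, it suffices to check conditions $(i)$ and $(ii)$ for one adjacent pair and one non-adjacent pair, which is an immediate inspection of the drawing. I expect the transitivity bookkeeping, rather than any genuine difficulty, to be the main point to be careful about, and I would simply cite \cite{Petersen} for these standard facts.
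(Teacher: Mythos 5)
Your argument is correct. The paper itself offers no proof of this proposition --- it is stated with a citation to \cite{Petersen} and the standard facts are simply imported --- so the comparison is between your explicit verification and a bare reference. Your main route, via the identification of $\mathrm{P}$ with the Kneser graph on the $2$-element subsets of $\{1,\dots,5\}$ (adjacency being disjointness), is the standard textbook proof and is complete: the counts $\binom{3}{2}=3$ for the degree, the impossibility of a $2$-subset avoiding a $4$-element union for $\lambda=0$, and the unique complement of a $3$-element union for $\mu=1$ are all right, and you correctly flag that the only substantive step is exhibiting the isomorphism with the drawing in figure \ref{figure} (ten vertices, fifteen edges). What this buys over the paper's citation is a self-contained argument that also makes transparent \emph{why} the parameters are what they are, which is in the spirit of the paper's later use of conditions $(1)$ and $(2)$ of Corollary \ref{ex1}. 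Your alternative route via vertex-, edge-, and non-edge-transitivity of $\mathrm{Aut}(\mathrm{P})$ is logically sound but somewhat circular in practice, since those transitivity facts are themselves usually established through the Kneser model (or through $\mathrm{Aut}(\mathrm{P})=S_5$, which the paper also only quotes); if you go that way you are again leaning entirely on \cite{Petersen}, so the first route is preferable.
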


\begin{cor}\label{ex1}
Hence, for the Petersen graph, one can write $(i), (ii)$ as follows
\begin{itemize}
\item[$(1)$] If $(i,k) \in E, (l,k) \in E$ and $i\neq l$, then $(i,l) \notin E$,
\item[$(2)$] If $(i,k) \notin E$ with $i \neq k$, then there exists exactly one $s \in V$ such that $(i,s)\in E$ and $(k,s) \in E$.
\end{itemize}
\end{cor}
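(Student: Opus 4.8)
The plan is to derive both statements directly from the preceding Proposition, which asserts that $\mathrm{P}$ is strongly regular with $\lambda = 0$ and $\mu = 1$, together with the defining conditions $(i)$ and $(ii)$ of strong regularity. Throughout I use that $\mathrm{P}$, being the simple graph pictured in figure \ref{figure}, has no loops and no multiple edges and is undirected, so that $(a,b) \in E$ if and only if $(b,a) \in E$, and $(a,a) \notin E$, for all $a, b \in V$.

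For $(1)$ I would argue by contradiction. Assume $(i,k) \in E$, $(l,k) \in E$, $i \neq l$, and suppose in addition that $(i,l) \in E$. Then $i$ and $l$ are adjacent vertices, so by $(i)$ they have exactly $\lambda = 0$ common neighbors. On the other hand, since the graph has no loops we have $i \neq k$ and $l \neq k$, and by undirectedness the edges $(i,k), (l,k) \in E$ exhibit $k$ as a vertex adjacent to both $i$ and $l$, i.e.\ $k$ is a common neighbor of $i$ and $l$. This contradicts $\lambda = 0$, and hence $(i,l) \notin E$.

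For $(2)$, let $(i,k) \notin E$ with $i \neq k$. Then $i$ and $k$ are non-adjacent and distinct vertices, so condition $(ii)$ applies and yields that they have exactly $\mu = 1$ common neighbor; call it $s$. By the definition of a common neighbor this means $(i,s) \in E$ and $(k,s) \in E$, and uniqueness of $s$ is precisely the statement $\mu = 1$. This finishes the argument.

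I do not expect a genuine obstacle: the corollary is a verbatim unwinding of the definition of \emph{strongly regular} in the special case $\lambda = 0$, $\mu = 1$. The only point that deserves a line of care is the bookkeeping about distinctness of the vertices involved and the absence of loops and multiple edges, which is what guarantees that $k$ in $(1)$ really counts as a common neighbor and that the notion ``non-adjacent'' in $(2)$ may legitimately be applied to the pair $i,k$.
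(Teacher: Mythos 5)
Your proof is correct and is exactly the immediate unwinding of the definition of strong regularity with $\lambda=0$, $\mu=1$ that the paper itself relies on (the corollary is stated there without a separate proof). The extra care you take about loops and the distinctness of $k$ from $i$ and $l$ is sound and does not change the argument.
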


\begin{rem}
There are only two other known strongly regular graphs with parameters $\lambda =0$, $\mu=1$, the cycle on $5$ vertices and the Hoffman-Singleton graph on $50$ vertices. 
\end{rem}

\subsection{Automorphism groups of finite graphs}
For a finite graph $\Gamma=(V,E)$ without multiple edges, a \emph{graph automorphism} is a bijective map $\sigma: V \to V$ such that $(\sigma(i), \sigma(j)) \in E$ if and only if $ (i,j) \in E$.
The set of all graph automorphisms of $\Gamma$ forms a group, the \emph{automorphism group} $\Aut(\Gamma)$. We can view $\Aut(\Gamma)$ as a subgroup of the symmetric group $S_n$, if $\Gamma$ has $n$ vertices
\begin{align*}
\Aut(\Gamma) = \{ \sigma \in S_n\;|\; \sigma \varepsilon = \varepsilon \sigma\}\subset S_n.
\end{align*} 
In case of the Petersen graph $\mathrm{P}$, it is well-known that $\Aut({\mathrm{P}}) = S_5$. 

\section{Quantum automorphism groups of finite graphs}

Now, we give the definitions of quantum automorphism groups of finite graphs. But first, we need the definition of compact matrix quantum groups which were defined by Woronowicz \cite{CMQG1,CMQG2} in 1987. 

\begin{defn}
A \emph{compact matrix quantum group} $G$ is a pair $(C(G),u)$, where $C(G)$ is a unital (not necessarily commutative) $C^*$-algebra which is generated by $u_{ij}$, $1 \leq i,j \leq n$, the entries of a matrix $u \in M_n(C(G))$. Moreover, the *-homomorphism $\Delta: C(G) \to C(G) \otimes C(G)$, $u_{ij} \mapsto \sum_{k=1}^n u_{ik} \otimes u_{kj}$ must exist, and $u$ and its transpose $u^{t}$ must be invertible. 
\end{defn}

In 2003, Bichon \cite{QBic} defined a quantum automorphism group as follows.

\begin{defn}
Let $\Gamma = (V, E)$ be a finite graph on $n$ vertices $V = \{1, ... , n \}$. The \emph{quantum automorphism group} $\QBic(\Gamma)$ is the compact matrix quantum group $(C(\QBic(\Gamma)), u)$, where $C(\QBic(\Gamma))$ is the universal $C^*$-algebra with generators $u_{ij}$, $1 \leq i,j \leq n$ and relations
\begin{align}\allowdisplaybreaks
&u_{ij} = u_{ij}^*, \quad u_{ij}u_{ik} = \delta_{jk}u_{ij} ,\quad  u_{ji}u_{ki} = \delta_{jk}u_{ji}, &&1 \leq i,j,k \leq n,\label{QA1}\\ 
&\sum_{l=1}^n u_{il} = 1 = \sum_{l=1}^n u_{li}, &&1 \leq i \leq n,\label{QA2}\\
&u_{ji}u_{lk} = u_{lk}u_{ji} = 0, && (i,k) \notin E, (j,l) \in E,\label{QA3}\\
&u_{ij}u_{kl} = u_{kl}u_{ij} = 0, && (i,k) \notin E, (j,l) \in E,\label{QA4}\\
&u_{ij}u_{kl} = u_{kl}u_{ij}, &&(i,k), (j,l) \in E.\label{QA5}
\end{align} 
\end{defn}

Two years later, Banica \cite{QBan} gave the following definition.

\begin{defn}
Let $\Gamma =(V, E)$ be a finite graph with $n$ vertices and adjacency matrix $\varepsilon \in M_n(\{0,1\})$.  The \emph{quantum automorphism group} $\QBan(\Gamma)$ is the compact matrix quantum group $(C(\QBan(\Gamma)),u)$, where $C(\QBan(\Gamma))$ is the universal $C^*$-algebra with generators $u_{ij}, 1 \leq i,j \leq n$ and Relations \eqref{QA1}, \eqref{QA2} together with
\begin{align}
u \varepsilon = \varepsilon u \label{QA7},
\end{align}
which is nothing but $\sum_ku_{ik}\epsilon_{kj}=\sum_k\epsilon_{ik}u_{kj}$.
\end{defn}

The next lemma will give a link between the two definitions. The proof of this lemma can be found in \cite[Lemma 3.1.1]{Ful} or \cite[Lemma 6.7]{SW16}.

\begin{lem}\label{link}
Let $\Gamma$ be a finite graph. It holds 
\[C(\QBan(\Gamma)) = C^*( u_{ij} \; | \; \textnormal{Relations  \eqref{QA1} -- \eqref{QA4}})\]
and therefore we have
\begin{align*}
\Aut(\Gamma) \subseteq \QBic(\Gamma) \subseteq \QBan(\Gamma)
\end{align*}
in the sense of compact matrix quantum subgroups.
\end{lem}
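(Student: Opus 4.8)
The plan is to establish the algebra identity first and then read off the chain of inclusions formally. Since both $C(\QBan(\Gamma))$ and $A:=C^*(u_{ij}\mid\text{Relations \eqref{QA1}--\eqref{QA4}})$ are universal $C^*$-algebras on the same generators, it suffices to show that, in the presence of the magic-unitary relations \eqref{QA1} and \eqref{QA2}, the single relation \eqref{QA7} is equivalent to the pair \eqref{QA3}, \eqref{QA4}. Throughout I would use the entrywise form of \eqref{QA7}, namely $\sum_{a:(a,c)\in E}u_{ba}=\sum_{a:(b,a)\in E}u_{ac}$ for all $b,c$.

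For \eqref{QA7} $\Rightarrow$ \eqref{QA4}: fix $(i,k)\notin E$ and $(j,l)\in E$, take $b=i$, $c=l$ in the entrywise relation, multiply on the left by $u_{ij}$ — which collapses the left-hand sum to $u_{ij}$ by row-orthogonality, since $(j,l)\in E$ — and then multiply on the right by $u_{kl}$; column-orthogonality turns the surviving right-hand side into $\delta_{ak}$-terms indexed by $a$ with $(i,a)\in E$, all of which vanish because $(i,k)\notin E$. This gives $u_{ij}u_{kl}=0$, and $u_{kl}u_{ij}=0$ follows by applying \eqref{QA1}. For \eqref{QA3} I would note that $v:=u^{t}$ is again a magic unitary and that $v\epsilon=\epsilon v$ (using $\epsilon=\epsilon^{t}$, which holds since $\Gamma$ is undirected), so the same computation applied to $v$ yields precisely \eqref{QA3}.

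The converse \eqref{QA3}, \eqref{QA4} $\Rightarrow$ \eqref{QA7} is the step I expect to be the main obstacle. Fix $b,c$ and set $P:=\sum_{a:(a,c)\in E}u_{ba}$ and $Q:=\sum_{a:(b,a)\in E}u_{ac}$; each is a sum of pairwise orthogonal self-adjoint projections (coming from row $b$, resp. column $c$, via \eqref{QA1}), hence a self-adjoint projection. Inserting $1=\sum_d u_{dc}$ on the right of $P$ and using \eqref{QA4} to discard the terms with $(b,d)\notin E$ gives $P=PQ$; inserting $1=\sum_d u_{bd}$ on the right of $Q$ and using \eqref{QA3} to discard the terms with $(c,d)\notin E$ gives $Q=QP$ (here one uses that $\Gamma$ is undirected to recognise $\sum_{d:(c,d)\in E}u_{bd}=P$). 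Finally, taking adjoints of $P=PQ$ and invoking $Q=QP$ yields $P=(PQ)^{*}=QP=Q$, which is exactly \eqref{QA7}. The delicate points are choosing which identity to insert and which of \eqref{QA3}/\eqref{QA4} applies on each side, and the self-adjointness trick that upgrades $P=PQ$, $Q=QP$ to $P=Q$.

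With the identity $C(\QBan(\Gamma))=A$ in hand, the inclusions are formal. The generators of $C(\QBic(\Gamma))$ satisfy \eqref{QA1}--\eqref{QA4}, which is a subset of the relations \eqref{QA1}--\eqref{QA5} defining it, so universality of $C(\QBan(\Gamma))$ — now presented via \eqref{QA1}--\eqref{QA4} — gives a surjective $*$-homomorphism $C(\QBan(\Gamma))\surj C(\QBic(\Gamma))$, $u_{ij}\mapsto u_{ij}$, which intertwines the comultiplications by the defining formula; hence $\QBic(\Gamma)$ is a quantum subgroup of $\QBan(\Gamma)$. Likewise, the coordinate functions $u_{ij}$ on $\Aut(\Gamma)$ (with $u_{ij}(\sigma)=1$ precisely when $\sigma(j)=i$) form a commuting magic unitary satisfying \eqref{QA3}--\eqref{QA5} exactly because every $\sigma\in\Aut(\Gamma)$ preserves $E$ in both directions; universality of $C(\QBic(\Gamma))$ then yields $C(\QBic(\Gamma))\surj C(\Aut(\Gamma))$, so $\Aut(\Gamma)$ is a quantum subgroup of $\QBic(\Gamma)$.
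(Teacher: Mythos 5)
Your argument is correct, and it is worth noting that the paper itself does not prove this lemma at all --- it simply cites \cite[Lemma 3.1.1]{Ful} and \cite[Lemma 6.7]{SW16} --- so you have supplied the content that the paper defers to the literature; what you write is essentially the standard argument from those references. All the individual steps check out: the entrywise form of \eqref{QA7} is right, the collapse of $\sum_{a:(a,l)\in E}u_{ij}u_{ia}$ to $u_{ij}$ and the vanishing of $\sum_{a:(i,a)\in E}u_{ij}u_{al}u_{kl}$ via row/column orthogonality give \eqref{QA4}, the projections $P$ and $Q$ are indeed sums of pairwise orthogonal projections, and the upgrade $P=P^*=(PQ)^*=QP=Q$ is exactly the right trick for the converse. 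The one caveat is that the lemma is stated for an arbitrary finite graph, whereas your proof uses $\epsilon=\epsilon^{t}$ in two places (the transpose trick yielding \eqref{QA3}, and the identification $\sum_{d:(c,d)\in E}u_{bd}=P$). For a directed graph one needs the additional observation that a magic unitary satisfies $u^{-1}=u^{t}$, so $u\epsilon=\epsilon u$ forces $\epsilon u^{t}=u^{t}\epsilon$ and hence $u\epsilon^{t}=\epsilon^{t}u$, after which your computations go through with $\epsilon$ and $\epsilon^{t}$ kept apart; since every application in this paper (in particular Theorem \ref{BaneqBic}) concerns undirected graphs, this is a cosmetic rather than a substantive gap. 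The final passage from the algebra identity to the chain of quantum subgroups via universal surjections intertwining the comultiplications is also correct.
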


The next definition is due to Banica and Bichon \cite{BanBic}.

\begin{defn}
Let $\Gamma = (V,E)$ be a finite graph. We say that $\Gamma$ has \emph{no quantum symmetry} if $C(\QBan(\Gamma))$ is commutative, or equivalently
\begin{align*}
C(\QBan(\Gamma)) = C(\Aut(\Gamma)).
\end{align*}
\end{defn} 

For more on quantum automorphism groups of finite graphs, see \cite{SWe}.

\section{The Quantum Automorphism group of the Petersen Graph}

In this section, we show that the Petersen Graph $\mathrm{P}$ has no quantum symmetry. We use the following lemma.

\begin{lem}\label{com}
Let $(u_{ij})_{1 \leq i,j \leq n}$ be the generators of $C(\QBic(\Gamma))$ or $C(\QBan(\Gamma))$. If we have 
\begin{align*}
u_{ij}u_{kl} = u_{ij}u_{kl}u_{ij}
\end{align*}
 then $u_{ij}$ and $u_{kl}$ commute.
\end{lem}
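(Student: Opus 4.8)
The plan is to exploit the two families of relations in \eqref{QA1}, namely that each row and each column of $u$ consists of projections summing to~$1$, so that $u_{ij}u_{ik}=\delta_{jk}u_{ij}$ and $u_{ji}u_{ki}=\delta_{jk}u_{ji}$. These ``magic unitary'' relations force a lot of orthogonality, and the trick is to multiply the hypothesis $u_{ij}u_{kl}=u_{ij}u_{kl}u_{ij}$ on suitable sides by these projections and then take adjoints.

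Concretely, the first step is to take the $*$-adjoint of the hypothesis. Since all $u_{ij}$ are self-adjoint, $(u_{ij}u_{kl}u_{ij})^* = u_{ij}u_{kl}u_{ij}$ and $(u_{ij}u_{kl})^* = u_{kl}u_{ij}$, so we immediately obtain
\begin{align*}
u_{kl}u_{ij} = u_{ij}u_{kl}u_{ij}.
\end{align*}
Combining this with the hypothesis gives $u_{ij}u_{kl} = u_{kl}u_{ij}$, which is exactly the assertion. So the proof is essentially just ``apply $*$ and compare''.

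The only point that needs a word of care is the degenerate overlap cases, i.e. when the pairs of indices coincide or share a row or column. If $(i,j)=(k,l)$ the statement is trivial since $u_{ij}$ commutes with itself. If $i=k$ but $j\neq l$, then $u_{ij}u_{kl}=u_{ij}u_{il}=0$ by \eqref{QA1}, and likewise $u_{kl}u_{ij}=0$, so they commute; the same holds if $j=l$ but $i\neq k$ using the column relation $u_{ji}u_{ki}=\delta_{jk}u_{ji}$. In all remaining cases the computation above applies verbatim. I do not expect any genuine obstacle here: the lemma is a short algebraic manipulation, and the ``hard part'', such as it is, is merely remembering that self-adjointness of the generators is what makes the adjoint of the triple product equal to itself.
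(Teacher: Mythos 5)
Your proof is correct and is essentially the paper's own argument: take adjoints of $u_{ij}u_{kl}=u_{ij}u_{kl}u_{ij}$, using self-adjointness of the generators, to get $u_{kl}u_{ij}=u_{ij}u_{kl}u_{ij}=u_{ij}u_{kl}$. The separate treatment of overlapping index cases is harmless but unnecessary, since the adjoint computation already covers them.
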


\begin{proof}
It holds 
\begin{align*}
u_{ij}u_{kl}= u_{ij}u_{kl}u_{ij}= (u_{ij}u_{kl}u_{ij})^* = (u_{ij}u_{kl})^*= u_{kl}u_{ij}.
\end{align*}
\end{proof}

%eher für alle graphen mit girth >= 5 ?
The next theorem shows that $\QBan(\mathrm{P})= \QBic(\mathrm{P})$. We state it slightly more general. 

\begin{thm}\label{BaneqBic}
Let $\Gamma$ be an undirected finite graph fulfilling the conditions $(1)$ and $(2)$ of Corollary \ref{ex1}. Then we have $\QBan(\Gamma) = \QBic(\Gamma)$. 
\end{thm}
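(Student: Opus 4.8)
The plan is to show that the generators $u_{ij}$ of $C(\QBan(\Gamma))$ satisfy the extra Bichon relations \eqref{QA5}, i.e.\ $u_{ij}u_{kl}=u_{kl}u_{ij}$ whenever $(i,k)\in E$ and $(j,l)\in E$; by Lemma \ref{link} we already know $C(\QBan(\Gamma))$ is presented by Relations \eqref{QA1}--\eqref{QA4}, so this is exactly what is needed to get the quotient map $C(\QBan(\Gamma))\to C(\QBic(\Gamma))$ and hence equality. By Lemma \ref{com}, it suffices to prove $u_{ij}u_{kl}=u_{ij}u_{kl}u_{ij}$ for all such pairs. So fix $i,j,k,l$ with $(i,k)\in E$ and $(j,l)\in E$.

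First I would expand $u_{ij}u_{kl}$ using the magic-unitary relations \eqref{QA2}: write $u_{kl}=u_{kl}\sum_{m}u_{mj}$, where the sum runs over all $m$, so $u_{ij}u_{kl}=\sum_{m}u_{ij}u_{kl}u_{mj}$. Now split the sum over $m$ into three cases according to the position of $m$ relative to $i$ in the graph: $m=i$; $m$ adjacent to $i$; and $m\neq i$ not adjacent to $i$. The first case contributes precisely the term $u_{ij}u_{kl}u_{ij}$ we want. The goal is to show every other term vanishes. For the adjacency case, note $(i,m)\in E$ together with $(i,k)\in E$ and condition $(1)$ (the $\lambda=0$ condition) forces $(m,k)\notin E$ unless $m=k$; combined with $(j,l)\in E$, relation \eqref{QA4} gives $u_{ij}u_{kl}\cdot(\text{something})$ or rather $u_{kl}u_{mj}=0$ whenever $m\neq k$ is adjacent to $i$ (since then $k$ and $m$ are non-adjacent while $l$ and $j$ are adjacent). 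The case $m=k$ is handled separately using \eqref{QA3}: here one uses that $i$ and $k$ are adjacent and $(i,i)\notin E$, $(k,i)\notin E$ are not edges — one has to be slightly careful, but the point is that $u_{ij}u_{kj}=0$ by \eqref{QA1} since $j=j$, so actually $m=k$ gives a term $u_{ij}u_{kl}u_{kj}$, and $u_{kl}u_{kj}=\delta_{lj}u_{kl}$; if $l\neq j$ this is zero, and if $l=j$ then $(j,l)\in E$ would mean $(j,j)\in E$, impossible in a simple graph, so this subcase is vacuous.

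The remaining case is $m\neq i$ with $(i,m)\notin E$. Here I would invoke condition $(2)$ (the $\mu=1$ condition): there is a \emph{unique} vertex $s$ adjacent to both $i$ and $m$. The idea is that $u_{ij}u_{kl}u_{mj}$ should vanish because, morally, $u_{ij}$ and $u_{mj}$ both having column index $j$ forces a "distance" obstruction once we also know $(i,k),(j,l)\in E$. Concretely I expect to expand the middle $u_{kl}$ again, $u_{kl}=\sum_{p}u_{ip}u_{kl}$ via $\sum_p u_{ip}=1$, or more usefully to use that $u_{ij}u_{kl}=u_{kl}u_{ij}$ can be bootstrapped once we know it for "closer" configurations; alternatively, use \eqref{QA3}/\eqref{QA4} with the unique common neighbour $s$ to kill the term. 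This last case is the main obstacle: the first two cases are a direct application of the relations, but disposing of the non-adjacent $m\neq i$ terms genuinely uses the full strength of strong regularity ($\mu=1$) and will require the most care — in particular identifying the right auxiliary vertex and the right relation (\eqref{QA3} versus \eqref{QA4}) to apply, possibly after inserting another resolution of identity.

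Once all terms with $m\neq i$ are shown to vanish, we are left with $u_{ij}u_{kl}=u_{ij}u_{kl}u_{ij}$, Lemma \ref{com} yields commutativity, relation \eqref{QA5} holds in $C(\QBan(\Gamma))$, and therefore the canonical surjection $C(\QBan(\Gamma))\surj C(\QBic(\Gamma))$ has an inverse, giving $\QBan(\Gamma)=\QBic(\Gamma)$.
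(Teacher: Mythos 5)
Your overall strategy---reduce \eqref{QA5} to the identity $u_{ij}u_{kl}=u_{ij}u_{kl}u_{ij}$ via Lemma \ref{com}, insert a resolution of the identity, and kill all cross terms using \eqref{QA1}--\eqref{QA4} together with strong regularity---is the right one, and your first two cases ($m=i$, and $m$ adjacent to $i$, including $m=k$) are handled correctly. But the proof is not complete: the case $m\neq i$, $(i,m)\notin E$ is precisely where the content of the theorem lies, and you leave it open. Moreover, the one concrete device you propose for it, inserting $\sum_p u_{ip}=1$ to the left of $u_{kl}$, is a no-op: since $u_{ij}u_{ip}=\delta_{jp}u_{ij}$ by \eqref{QA1}, the sum collapses back to $u_{ij}u_{kl}u_{mj}$ and nothing is gained. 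The ``bootstrap from closer configurations'' idea is also circular as stated, since the commutation $u_{kl}u_{mj}=u_{mj}u_{kl}$ for $(k,m),(l,j)\in E$ that would immediately kill the term is itself an instance of \eqref{QA5}.

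The missing case can be closed, and it is essentially the transposed version of the computation the paper actually performs. If $(k,m)\notin E$ the term dies at once by \eqref{QA4} (rows $k,m$ non-adjacent, columns $l,j$ adjacent), so assume $(k,m)\in E$. Then $k$ is a common neighbor of $i$ and $m$, hence by condition $(2)$ of Corollary \ref{ex1} it is the \emph{unique} one: for every $a\neq k$ either $(i,a)\notin E$ or $(a,m)\notin E$, so by \eqref{QA4} either $u_{ij}u_{al}=0$ or $u_{al}u_{mj}=0$. Therefore
\begin{align*}
u_{ij}u_{kl}u_{mj}=u_{ij}\left(\sum_{a=1}^n u_{al}\right)u_{mj}=u_{ij}u_{mj}=\delta_{im}u_{ij}=0,
\end{align*}
using \eqref{QA2} for the column sum $\sum_a u_{al}=1$ (not the row sum you reached for) and \eqref{QA1} at the last step. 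The paper avoids your three-way case distinction entirely by putting the resolution of identity on the column side from the start, writing $u_{ij}u_{kl}=u_{ij}u_{kl}\sum_{s:(l,s)\in E}u_{is}$ and showing $u_{ij}u_{kl}u_{is}=0$ for $s\neq j$ via the fact that $l$ is the unique common neighbor of $j$ and $s$. Your route also works once the step above is supplied, but as written the essential argument is missing.
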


\begin{proof}
We know from Lemma \ref{link} that
\begin{align*}
C(\QBan(\Gamma)) = C^*( u_{ij} \; | \; \textnormal{Relations  \eqref{QA1} -- \eqref{QA4}})
\end{align*}
and therefore we only have to show that Relation \eqref{QA5} is fulfilled in $C(\QBan(\Gamma))$. 
Let $(i,k) \in E$, $(j,l) \in E$. It holds
\begin{align*}
u_{ij}u_{kl} = u_{ij}u_{kl}\left(\sum_{s;(l,s) \in E} u_{is}\right)
 \end{align*} 
by Relations \eqref{QA2} and \eqref{QA3}. 

Take $s$ with $(l,s) \in E$, $s \neq j$. Since we also have $(j,l) \in E$, we get $(j,s) \notin E$ by $(1)$ of Corollary \ref{ex1}. Thus, by $(2)$ of Corollary \ref{ex1}, the only common neighbor of $j$ and $s$ is $l$. Hence, for all $a \neq l$, we have $(a,s) \notin E$ or $(a,j) \notin E$. Then Relation \eqref{QA3} implies $u_{ka}u_{is} =0$ or $u_{ij}u_{ka} = 0$ for all $a \neq l$. By also using Relations \eqref{QA1} and \eqref{QA2}, we get 
\begin{align*}
u_{ij}u_{kl}u_{is} = u_{ij}\left(\sum_{a=1}^n u_{ka}\right)u_{is}
=u_{ij}u_{is}
=0.
\end{align*} 
Therefore, we obtain
\begin{align*}
u_{ij}u_{kl} = u_{ij}u_{kl}\left(\sum_{s;(l,s) \in E} u_{is} \right) = u_{ij}u_{kl}u_{ij}
\end{align*}
and we conclude $u_{ij}u_{kl} = u_{kl}u_{ij}$ by Lemma \ref{com}. Thus, Relation \eqref{QA5} is fulfilled in $C(\QBan(\Gamma))$ and we get $\QBan(\Gamma) = \QBic(\Gamma)$.
\end{proof}

Now, we can prove our main result. 

\begin{thm}\label{nosymmetry}
The Petersen Graph $\mathrm{P}$ has no quantum symmetry, i.e. 
\begin{align*}
\QBan(\mathrm{P}) = \Aut(\mathrm{P}) = S_5.
\end{align*} 
\end{thm}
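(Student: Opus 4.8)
The plan is to show that all generators $u_{ij}$ of $C(\QBan(\mathrm{P}))$ commute, which by the "no quantum symmetry" definition gives $\QBan(\mathrm{P}) = \Aut(\mathrm{P}) = S_5$. By Theorem \ref{BaneqBic} (applicable since $\mathrm{P}$ satisfies conditions $(1)$ and $(2)$ of Corollary \ref{ex1}), we already know $\QBan(\mathrm{P}) = \QBic(\mathrm{P})$, so we may freely use all of Relations \eqref{QA1}--\eqref{QA5}. The strategy is to prove the key commutation $u_{ij}u_{kl} = u_{kl}u_{ij}$ for \emph{every} pair of indices by splitting into cases according to the adjacency relationships between $i,k$ and between $j,l$, and in each case manipulating the relations to reach a situation of the form $u_{ij}u_{kl} = u_{ij}u_{kl}u_{ij}$, at which point Lemma \ref{com} closes the argument.

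The case analysis I would carry out: (a) $i = k$ or $j = l$: commutation is immediate from \eqref{QA1}. (b) $(i,k) \in E$ and $(j,l) \in E$: this is exactly Relation \eqref{QA5}. (c) $(i,k) \notin E$ with $i \neq k$, and $(j,l) \in E$ (or symmetrically $(i,k)\in E$, $(j,l)\notin E$ with $j\neq l$): here \eqref{QA3}/\eqref{QA4} force $u_{ij}u_{kl} = 0 = u_{kl}u_{ij}$. (d) The remaining and genuinely hard case: $(i,k) \notin E$, $i \neq k$, and $(j,l) \notin E$, $j \neq l$. Here I would use the strong regularity heavily. Since $i \neq k$ are non-adjacent, by $(2)$ of Corollary \ref{ex1} there is a unique common neighbor $m$ of $i$ and $k$; similarly a unique common neighbor $p$ of $j$ and $l$. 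The idea is to expand $u_{ij}u_{kl}$ using partitions of unity $\sum_s u_{is} = 1$ and $\sum_t u_{tl} = 1$ indexed over neighborhoods, and show that the adjacency constraints $(1)$ and $(2)$ kill all terms except those that reassemble into $u_{ij}u_{kl}u_{ij}$ (or a product manifestly equal to its own reverse). Concretely, one writes $u_{kl} = u_{kl}\sum_{s:(i,s)\in \text{something}} u_{is}$ and analyzes which $s$ survive; the uniqueness of common neighbors in $\mathrm{P}$ should force the surviving index to be forced, yielding $u_{ij}u_{kl}u_{ij}$.

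The main obstacle I anticipate is precisely case (d): making the insertion-of-identity bookkeeping actually collapse. The difficulty is that, unlike in the proof of Theorem \ref{BaneqBic} where one endpoint pair was adjacent (so \eqref{QA3} and the $\lambda = 0$ condition could be invoked directly), here both $(i,k)$ and $(j,l)$ are non-edges, so one must go through intermediate vertices and chain together several applications of $\lambda = 0$ and $\mu = 1$. I expect one needs to consider sub-cases according to whether the unique common neighbor $m$ of $i,k$ satisfies $(m,j)\in E$ or $(m,l)\in E$ or neither, and track how the generator $u_{mj}$ or $u_{ml}$ behaves; the $\mu=1$ condition guarantees these intermediate vertices are unique, which is what ultimately makes the sums collapse. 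There may also be a small extra argument needed (using the specific structure of $\mathrm{P}$ beyond mere strong regularity, e.g. that $\mathrm{P}$ has girth $5$ and the non-neighbors of a vertex together with their link form a well-understood configuration) to rule out a stray cross term. Once every pair commutes, $C(\QBan(\mathrm{P}))$ is commutative, hence equals $C(\Aut(\mathrm{P})) = C(S_5)$, completing the proof.
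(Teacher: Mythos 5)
Your overall framework is the same as the paper's: invoke Theorem \ref{BaneqBic} to get all of Relations \eqref{QA1}--\eqref{QA5}, dispose of the cases where some pair is adjacent or equal, reduce to $(i,k)\notin E$, $(j,l)\notin E$ with $i\neq k$, $j\neq l$, and aim for the identity $u_{ij}u_{kl}=u_{ij}u_{kl}u_{ij}$ so that Lemma \ref{com} applies. Cases (a)--(c) are handled correctly. But case (d) is the entire content of the theorem, and you leave it as an anticipated difficulty rather than a proof; worse, the one concrete mechanism you do propose does not work. If you insert $\sum_s u_{is}=1$ to the right of $u_{kl}$ (or between $u_{ij}$ and $u_{kl}$), the only relation available to kill terms is \eqref{QA4} with the non-edge $(i,k)$, which annihilates $u_{kl}u_{is}$ only when $(l,s)\in E$; since $\mathrm{P}$ is $3$-regular on ten vertices, seven terms survive and nothing collapses. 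The uniqueness of common neighbors gives you no leverage here because neither factor adjacent to the inserted sum sits on an edge with $i$.

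The paper's actual argument routes everything through the \emph{intermediate} vertices: with $s$ the unique common neighbor of $i,k$ and $t$ the unique common neighbor of $j,l$, one inserts the row $\sum_b u_{sb}=1$ between $u_{ij}$ and $u_{kl}$. Because $(i,s)\in E$ and $(s,k)\in E$, Relation \eqref{QA3}/\eqref{QA4} applies on \emph{both} sides, and the $\mu=1$ condition for the pair $j,l$ kills every $b\neq t$, leaving $u_{ij}u_{kl}=u_{ij}u_{st}u_{kl}$. This is only the first of several steps: one then commutes $u_{st}$ past $u_{kl}$ via \eqref{QA5} (legitimate since $(s,k),(t,l)\in E$), inserts $\sum_{p:(t,p)\in E}u_{ip}$ on the right, uses $3$-regularity to write this as $u_{ij}+u_{il}+u_{iq}$ with $q$ the third neighbor of $t$, and finally kills the terms $u_{ij}u_{st}u_{kl}u_{il}$ (easy, by \eqref{QA1}) and $u_{ij}u_{st}u_{kl}u_{iq}$ (harder: it requires yet another insertion, replacing $u_{kl}$ by $u_{kl}+u_{kj}+u_{kq}=\sum_a u_{ka}$ modulo terms that vanish, to reduce to $u_{st}u_{ij}u_{iq}=0$). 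None of this chain is present in your sketch, and your suggestion that girth-$5$ or other special structure of $\mathrm{P}$ might be needed is a symptom of not having found the collapse mechanism: the proof uses only $\lambda=0$, $\mu=1$ and $3$-regularity. As it stands the proposal is a plausible plan with the decisive computation missing.
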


\begin{proof}
We show that $C(\QBan(\mathrm{P}))$ is commutative by using $(1),(2)$ of Corollary \ref{ex1} and the fact that $\mathrm{P}$ is $3$-regular. Let $(u_{ij})_{1 \leq i,j \leq 10}$ be the generators of $C(\QBan(\mathrm{P}))$. It suffices to show that 
\begin{align*}
u_{ij}u_{kl} = u_{kl}u_{ij}
\end{align*} 
for $(i,k) \notin E$, $(j,l) \notin E$, because we have
\begin{align*}
u_{ij}u_{kl} = 0 = u_{kl}u_{ij}
\end{align*}
for $(i,k) \notin E$, $(j,l) \in E$ or $(i,k) \in E$, $(j,l) \notin E$ by Relations \eqref{QA3}, \eqref{QA4} and we know that Relation \eqref{QA5} is fulfilled in $C(\QBan(\mathrm{P}))$ by Theorem \ref{BaneqBic}.

Let $(i,k) \notin E$, $(j,l) \notin E$, $i \neq k, j \neq l$ (it is obvious for $i=k$ or $j=l$). By $(2)$ of Corollary \ref{ex1} there exist exactly one $s \in E$ such that $(i,s) \in E, (k,s) \in E$ and exactly one $t \in E$ such that $(j,t) \in E, (l,t) \in E$.\\ 

\noindent\emph{Step 1: It holds $u_{ij}u_{kl} = u_{ij}u_{st}u_{kl}$.}\\
Since $t$ is the only common neighbor of $j$ and $l$, we have $(j,b) \notin E$ or $(b,l) \notin E$ for all $b \neq t$. Therefore, Relation \eqref{QA3} yields $u_{ij}u_{sb} =0$ or $u_{sb}u_{kl} =0$ for all $b \neq t$. By also using Relation \eqref{QA2}, we get 
\begin{align*}
u_{ij}u_{kl} = u_{ij}\left(\sum_{b=1}^n u_{sb}\right)u_{kl} = u_{ij}u_{st}u_{kl}.
\end{align*} 

\noindent\emph{Step 2: We have $u_{ij}u_{kl} = u_{ij}u_{st}u_{kl}(u_{ij} + u_{il} + u_{iq})$, where $q$ is the third neighbor of $t$.}\\
By using \emph{Step 1} and $u_{st}u_{kl} = u_{kl}u_{st}$ (by Relation \eqref{QA5}), we have $u_{ij}u_{kl} = u_{ij}u_{kl}u_{st}$.
Relations \eqref{QA2} and \eqref{QA3} imply
\begin{align*}
u_{ij}u_{kl} = u_{ij}u_{kl}u_{st}\left(\sum_{p;(t,p) \in E} u_{ip}\right).
\end{align*}
Since the Petersen graph is $3$-regular and since we know that $j$ and $l$ are neighbors of $t$, we get
\begin{align*}
u_{ij}u_{kl}&= u_{ij}u_{kl}u_{st}(u_{ij} + u_{il} + u_{iq}) = u_{ij}u_{st}u_{kl}(u_{ij} + u_{il} + u_{iq}),
\end{align*}
where we denote by $q$ the third neighbor of $t$.\\ 

\noindent\emph{Step 3: It holds $u_{ij}u_{st}u_{kl}u_{il} = 0$ and $u_{ij}u_{st}u_{kl}u_{iq} =0$.}\\
By Relation \eqref{QA1}, we have $u_{ij}u_{st}u_{kl}u_{il}  = 0$, because $k \neq i$. 
 For the second equation, observe that
\begin{align*}
u_{ij}u_{st}u_{kq}u_{iq} = 0 \quad \text{ and } \quad u_{ij}u_{st}u_{kj}u_{iq} = u_{ij}u_{kj}u_{st}u_{iq} = 0 
\end{align*}
by Relations \eqref{QA1} and \eqref{QA5}. We therefore get
\begin{align*}
u_{ij}u_{st}u_{kl}u_{iq} &= u_{ij}u_{st}(u_{kl}+u_{kj} + u_{kq})u_{iq}\\ 
&= u_{ij}u_{st}\left(\sum_{a;(t,a) \in E} u_{ka}\right)u_{iq}\\
&= u_{ij}u_{st}\left(\sum_{a=1}^n u_{ka}\right)u_{iq}\\
&= u_{ij}u_{st}u_{iq},
\end{align*}
where we also used Relations \eqref{QA2}, \eqref{QA3}. 
By Relation \eqref{QA1} and using $u_{ij}u_{st} = u_{st}u_{ij}$, we obtain
\begin{align*}
u_{ij}u_{st}u_{kl}u_{iq} = u_{ij}u_{st}u_{iq} = u_{st}u_{ij}u_{iq} = 0,
\end{align*}
since $j \neq q$.\\

\noindent\emph{Step 4: We  have $u_{ij}u_{kl} = u_{kl}u_{ij}$.}\\
From the previous steps, we conclude
\begin{align*}
u_{ij}u_{kl} = u_{ij}u_{kl}u_{ij}.
\end{align*}
Then Lemma \ref{com} yields $u_{ij}u_{kl} = u_{kl} u_{ij}$ and this completes the proof.
\end{proof}

\bibliographystyle{plain}
\bibliography{QAutPetersen}

\end{document}